\documentclass[12pt]{article}

\usepackage{amsfonts, amssymb, amsmath, amsthm, comment}

\theoremstyle{definition}

\theoremstyle{plain}
\newtheorem{theorem}{Theorem}[section]
\newtheorem{lemma}[theorem]{Lemma}

\theoremstyle{definition}
\theoremstyle{remark}
\newtheorem{remark}[theorem]{Remark}

\newcommand{\chitau}{\chi_{(-\tau,\tau)}}
\newcommand{\chione}{\chi_{(-1,1)}}
\newcommand{\Ltsp}{\textbf{\textit{L}}^2}
\newcommand{\LtR}{\textbf{\textit{L}}^2 (\mathbb{R})}
\newcommand{\RR}{\mathbb{R}}

\renewcommand{\leq}{\leqslant}

\renewcommand{\geq}{\geqslant}

\usepackage{url}

\makeatletter
\def\url@leostyle{%
  \@ifundefined{selectfont}{\def\UrlFont{\sf}}{\def\UrlFont{\small\ttfamily}}}
\makeatother

\urlstyle{leo}

\begin{document}

\title{On the prolate spheroidal wave functions and Hardy's uncertainty principle}


\author{Elmar Pauwels 
\thanks{NuHAG, Faculty of Mathematics, University of Vienna,
\mbox{elmar.pauwels@univie.ac.at}} \ \ 
Maurice de Gosson
\thanks{Corresponding author, NuHAG, Faculty of Mathematics, University of Vienna, 
\mbox{maurice.de.gosson@univie.ac.at}}}

\date{}

\maketitle

\begin{abstract}
  We prove a weak version of Hardy's uncertainty principle using
  properties of the prolate spheroidal wave functions (PSWFs). We
  describe the eigenvalues of the sum of a time limiting operator and
  a band limiting operator acting on $L^2(\mathbb{R})$.  A weak
  version of Hardy's uncertainty principle follows from the asymptotic
  behavior of the largest eigenvalue as the time limit and the band
  limit approach infinity.  An asymptotic formula for this eigenvalue
  is obtained from its well-known counterpart for the prolate integral
  operator.

\end{abstract}

\section{Introduction}

Classical Hardy's uncertainty principle is formulated as follows.
\begin{theorem}
Let $a,b,M > 0$, and let $f$ be a measureable function on $\RR$ such that 
\begin{equation}
  |f(x)| \leq M \, e^{-ax^2/2} ,
\end{equation}
and
\begin{equation}
	|\hat{f}(\xi)| \leq M \, e^{-b\xi^2/2},
\end{equation}
for all $x,\xi \in \RR$. If $ab > 1$, then $f = 0$.
\end{theorem}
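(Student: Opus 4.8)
The plan is to transport the problem to the Fourier side and exploit holomorphy. With the normalisation $\hat f(\xi)=\int_\RR f(x)e^{-ix\xi}\,dx$, the first step is to observe that the Gaussian bound $|f(x)|\le Me^{-ax^2/2}$ makes the integral converge for complex frequencies: the formula
\[
F(z)=\int_\RR f(x)e^{-ixz}\,dx
\]
defines an entire function (by Morera's theorem together with Fubini, the integrand being dominated by $|f(x)|e^{|x|\,|\operatorname{Im}z|}$, which is integrable thanks to the Gaussian decay of $f$), and $F$ agrees with $\hat f$ on $\RR$. Completing the square gives the growth estimate
\[
|F(\xi+i\eta)|\le M\!\int_\RR e^{-ax^2/2+x\eta}\,dx=M\sqrt{2\pi/a}\;e^{\eta^2/(2a)},
\]
so $F$ is of order $2$ with all of its growth carried in the imaginary direction, while on the real axis the second hypothesis gives the genuine decay $|F(\xi)|\le Me^{-b\xi^2/2}$.

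The second step is to balance these two behaviours with a Gaussian multiplier. Because $ab>1$, the interval $\bigl(\tfrac1{2a},\tfrac b2\bigr)$ is nonempty; I would fix $\beta$ in it and set $g(z)=F(z)e^{\beta z^2}$, again entire of order $2$. On the real axis $|g(\xi)|\le Me^{(\beta-b/2)\xi^2}\to0$ since $\beta<\tfrac b2$, and on the imaginary axis $|g(iy)|\le M\sqrt{2\pi/a}\,e^{(1/(2a)-\beta)y^2}\to0$ since $\beta>\tfrac1{2a}$. Thus $g$ is an entire function of order $2$ that decays like a Gaussian along \emph{both} coordinate axes, and the strict inequality $ab>1$ is exactly what upgrades mere boundedness to decay here. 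The theorem is now reduced to the purely complex-analytic claim that such a $g$ must vanish identically: indeed $g\equiv0$ forces $F\equiv0$, hence $\hat f\equiv0$ and $f=0$.

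The hard part is this reduced claim, and the difficulty is that the order of $g$ equals $2$, which is precisely the critical exponent for a sector of opening $\pi/2$. The naive Phragmén–Lindelöf / Liouville argument fails at criticality: the function $e^{iz^2}$ is entire of order $2$ and has modulus $1$ on both axes, yet is unbounded in the second and fourth quadrants, so boundedness on the axes alone is not enough. What rescues the argument is the genuine Gaussian decay provided by $ab>1$. I would exploit it by symmetrising: the product
\[
\Pi(z)=g(z)\,g(iz)\,g(-z)\,g(-iz)
\]
is invariant under $z\mapsto iz$, hence equals $R(z^{4})$ for an entire $R$; the fourfold contraction drops the order of $R$ to at most $\tfrac12$, while along the positive real axis $\Pi$ inherits Gaussian decay from all four half-axes, so $R(\zeta)$ decays like $e^{-c\sqrt{\zeta}}$ as $\zeta\to+\infty$. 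Setting $S(t)=R(t^{2})$ produces an even entire function of exponential type with $|S(t)|\le Ce^{-c|t|}$ on all of $\RR$; by the Paley–Wiener theorem its inverse transform is both compactly supported and (because of the exponential decay of $S$) holomorphic on a strip, forcing it to vanish, so $S\equiv0$, $R\equiv0$, $\Pi\equiv0$, and therefore $g\equiv0$.

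I expect this borderline Phragmén–Lindelöf step to be the sole real obstacle; everything before it is routine. An alternative route for the same step, which I would keep in reserve, is to first establish the endpoint case $ab=1$ — whose extremisers are exactly the Gaussians $f(x)=ce^{-ax^2/2}$ — and then deduce the strict case immediately, since such a Gaussian has $|\hat f(\xi)|$ decaying only at the critical rate $e^{-\xi^2/(2a)}$, which cannot be dominated by $Me^{-b\xi^2/2}$ when $b>1/a$ unless $c=0$. Either way, the function of the hypothesis $ab>1$ is to convert the critical, and genuinely false, borderline estimate into a strict exponential decay that makes the uniqueness argument go through.
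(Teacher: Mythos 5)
Your proposal is correct, but it is genuinely different from what the paper does --- in fact the paper never proves this statement at all: for the classical theorem ($ab>1$) it only cites the literature, and the paper's own proof concerns the strictly weaker Theorem~\ref{thm:hardyweak} ($ab\geq 4$), obtained by real-variable methods (the spectrum of $\chitau+S_\omega$, the PSWF eigenvalue asymptotics \eqref{equ:lambda0}, and the estimate $\langle(2I-\chitau-S_\omega)f,f\rangle\leq \frac{M^2}{\omega}e^{-2\omega^2}$ with $\omega\to\infty$). Your route is the complex-analytic one that the paper deliberately avoids: extend $\hat f$ to an entire function $F$ of order $2$ and finite type, multiply by $e^{\beta z^2}$ with $\beta\in\bigl(\tfrac{1}{2a},\tfrac{b}{2}\bigr)$ (nonempty exactly when $ab>1$), and then kill the resulting $g$. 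Your twist on the hard step is sound: since $\Pi(z)=g(z)g(iz)g(-z)g(-iz)$ is invariant under $z\mapsto iz$, it equals $R(z^4)$ with $R$ entire of order at most $\tfrac12$; on the real axis the four half-axis bounds combine (the $\beta$'s cancel) to give $|\Pi(t)|\leq Ce^{-(b-1/a)t^2}$, so $S(t)=R(t^2)$ is of exponential type with $|S(t)|\leq Ce^{-c|t|}$ on $\RR$, and Paley--Wiener plus the identity theorem force $S\equiv 0$; the integral-domain property of entire functions then gives $g\equiv 0$, hence $f=0$. This cleanly sidesteps the critical order-$2$ Phragm\'en--Lindel\"of obstruction you correctly flag (the $e^{iz^2}$ example). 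Two small points to make explicit in a write-up: you need $R$ to be of \emph{finite type} for its order $\tfrac12$ (so that $S$ has exponential type), which does follow from your global bound $|g(z)|\leq Ce^{\sigma|z|^2}$; and the normalisation mismatch with the paper's $\tfrac{1}{\sqrt{2\pi}}$ convention only changes constants, not the threshold $ab>1$. What each approach buys: yours reaches the sharp constant by holomorphy; the paper's reaches only $ab\geq 4$ but does so with real analysis and exhibits the link between Hardy's principle and the theory of bandlimited functions, which is its stated purpose.
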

Several proofs of this classical theorem are known e.g.
\cite{hup_revisited, sharphardy, hardyoriginal, tao_blog}.  Typically,
they use methods of complex analysis, and rely on somewhat indirect
arguments. Our objective is to give a new and direct proof with
methods of real analysis, however only of the following weaker result.
\begin{theorem} \label{thm:hardyweak}
Let $a,b,M > 0$, and let $f$ be a measureable function on $\RR$ such that 
\begin{equation} \label{equ:boundtimedom}
  |f(x)| \leq M \, e^{-ax^2/2} 
\end{equation}
and
\begin{equation} \label{equ:boundfrequdom}
	|\hat{f}(\xi)| \leq M \, e^{-b\xi^2/2},
\end{equation}
for all $x,\xi \in \RR$. If $ab \geq 4$, then $f = 0$.
\end{theorem}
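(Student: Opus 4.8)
The plan is to turn the spectral obstruction to simultaneous time–frequency concentration, as encoded by the prolate spheroidal wave functions, into a quantitative inequality, and then to feed the Gaussian hypotheses \eqref{equ:boundtimedom}--\eqref{equ:boundfrequdom} into it. Write $P_\tau$ for the time-limiting operator $(P_\tau f)(x)=\chitau(x)\,f(x)$ and $Q_\sigma=\mathcal{F}^{-1}\chi_{(-\sigma,\sigma)}\mathcal{F}$ for the band-limiting operator on $L^2(\RR)$. The hypotheses force $f\in L^2(\RR)$, since $|f|^2\le M^2e^{-ax^2}$ is integrable and likewise $|\hat f|^2\le M^2e^{-b\xi^2}$; both $P_\tau,Q_\sigma$ are orthogonal projections, and
\begin{equation*}
\langle (P_\tau+Q_\sigma)f,f\rangle=\|P_\tau f\|^2+\|Q_\sigma f\|^2 .
\end{equation*}
The first step is an upper bound. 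Using the elementary identity $\|P_\tau+Q_\sigma\|=1+\|P_\tau Q_\sigma\|$ for orthogonal projections together with $\|P_\tau Q_\sigma\|^2=\|P_\tau Q_\sigma P_\tau\|=\lambda_0(\sigma\tau)$, where $\lambda_0(c)$ is the top eigenvalue of the prolate integral operator (the dilation of the kernel $\sin c(x-y)/\pi(x-y)$ on $(-1,1)$, so that $c=\sigma\tau$), I get
\begin{equation*}
\|P_\tau f\|^2+\|Q_\sigma f\|^2\le\bigl(1+\sqrt{\lambda_0(\sigma\tau)}\bigr)\|f\|^2 .
\end{equation*}

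The second step is a matching lower bound from the Gaussian tails. Since $\|f-P_\tau f\|^2=\int_{|x|>\tau}|f|^2\le M^2\int_{|x|>\tau}e^{-ax^2}\,dx$ and, by Plancherel, $\|f\|^2-\|Q_\sigma f\|^2\lesssim M^2\int_{|\xi|>\sigma}e^{-b\xi^2}\,d\xi$, the complementary-error-function estimates give $\|P_\tau f\|^2\ge\|f\|^2-T_\tau$ and $\|Q_\sigma f\|^2\ge\|f\|^2-T'_\sigma$ with $T_\tau\lesssim M^2 e^{-a\tau^2}/(a\tau)$ and $T'_\sigma\lesssim M^2 e^{-b\sigma^2}/(b\sigma)$. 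Combining the two steps and using $1-\sqrt{\lambda_0}\ge\tfrac12(1-\lambda_0)$ yields the master inequality
\begin{equation*}
\tfrac12\bigl(1-\lambda_0(\sigma\tau)\bigr)\|f\|^2\le\bigl(1-\sqrt{\lambda_0(\sigma\tau)}\bigr)\|f\|^2\le T_\tau+T'_\sigma .
\end{equation*}

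The final step is asymptotic. Inserting the classical first-eigenvalue asymptotic $1-\lambda_0(c)\sim A\,c^{1/2}e^{-2c}$ as $c\to\infty$ (with $c=\sigma\tau$), the master inequality reads, up to constants,
\begin{equation*}
\sqrt{\sigma\tau}\;e^{-2\sigma\tau}\,\|f\|^2\lesssim\frac{e^{-a\tau^2}}{a\tau}+\frac{e^{-b\sigma^2}}{b\sigma}.
\end{equation*}
I would then let $\tau\to\infty$ along a ray $\sigma=\alpha\tau$. Matching the exponents on the two sides forces the right-hand Gaussians to decay at least as fast as $e^{-2\sigma\tau}$, i.e.\ $a\tau\ge2\sigma$ and $b\sigma\ge2\tau$, which is the band $2/b\le\alpha\le a/2$; this band is nonempty precisely when $ab\ge4$. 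For $ab>4$ one picks $\alpha$ strictly inside, both right-hand exponentials beat $e^{-2\sigma\tau}$, and the ratio tends to $0$, giving $f=0$. The genuinely delicate point, which I expect to be the main obstacle, is the boundary case $ab=4$: there the band collapses to the single value $\alpha=a/2=2/b$, the dominant factors $e^{-a\tau^2}$, $e^{-b\sigma^2}$ and $e^{-2\sigma\tau}$ cancel exactly, and the argument no longer rests on exponential decay but only on the surviving sub-exponential corrections — the $\sqrt{c}$ prefactor in the eigenvalue asymptotic and the $1/\tau$ gain in the Gaussian tail — which together yield $\|f\|^2\lesssim M^2/\tau^2\to0$. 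Making this boundary cancellation rigorous therefore requires pinning down the eigenvalue asymptotic with enough precision to control the prefactor and exponent $2c$ simultaneously, which is where I anticipate the bulk of the work.
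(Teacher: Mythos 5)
Your proposal is correct, and its analytic skeleton is the same as the paper's: both play the tail estimate $\langle(\chitau+S_\omega)f,f\rangle \ge 2\|f\|^2 - T_\tau - T'_\sigma$ coming from the Gaussian hypotheses against the spectral bound $\langle(\chitau+S_\omega)f,f\rangle \le (1+\sqrt{\lambda_0})\|f\|^2$, and both win at the critical exponent purely through the polynomial prefactors (Slepian's $\sqrt{c}$ against the $1/\tau$ gain in the tail integral), ending with $\|f\|^2 \lesssim M^2/\tau^2 \to 0$. You differ from the paper in two genuine ways. First, the paper derives the spectral bound from its Theorem \ref{lem:eigenvalue}, proved by resolvent manipulations, Lemma \ref{lem:spectrum}, and compactness of $\chione S_\omega \chione$; you instead invoke the projection-norm identity $\|P+Q\|=1+\|PQ\|$ together with $\|PQ\|^2=\|PQP\|=\lambda_0(\sigma\tau)$, which bypasses the paper's Section 3 entirely. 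This is a legitimate and arguably cleaner route, but be aware that the direction you actually need, $\|P+Q\|\le 1+\|PQ\|$, is not a triviality: it is a theorem of Duncan and Taylor, so cite it or prove it --- a short proof: for a unit vector $f$, set $s=\|Pf\|^2+\|Qf\|^2$ and $t=\|PQ\|$; then $s=\langle Pf+Qf,f\rangle\le\|Pf+Qf\|$, while $|\langle Pf,Qf\rangle|=|\langle QP(Pf),Qf\rangle|\le t\,\|Pf\|\,\|Qf\|\le ts/2$, whence $s^2\le\|Pf+Qf\|^2\le s(1+t)$ and $s\le 1+t$. (Alternatively, the paper's Theorem \ref{lem:eigenvalue} itself gives the bound, since $\chitau+S_\omega$ is Hermitian.) Second, the paper rescales to $a=b=2$ and takes $\tau=\omega$, so that only the critical configuration survives; you keep general $a,b$ and optimize over rays $\sigma=\alpha\tau$, which displays transparently where the hypothesis $ab\ge 4$ enters (nonemptiness of the band $2/b\le\alpha\le a/2$) and separates the easy interior case $ab>4$ from the boundary $ab=4$. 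Finally, the ``main obstacle'' you anticipate at $ab=4$ is not one: the Slepian--Sonnenblick asymptotic \eqref{equ:lambda0} quoted in the paper already has exactly the precision your sketch requires, and your own prefactor computation ($\|f\|^2\lesssim M^2/\tau^2$) closes that case; indeed, after rescaling, the paper's entire proof is precisely this boundary computation.
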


We prove this weak version of Hardy's uncertainty principle using
properties of the prolate spheroidal wave functions (PSWFs), which
appear e.g. in a solution of the concentration problem for bandlimited
functions \cite{slepian_conc}.  First, we describe the spectrum of the
sum of a time limiting operator and a band limiting operator acting on
$L^2(\mathbb{R})$. Specifically, we express the spectrum in terms of
the eigenvalues of the prolate integral operator.

Then we derive the weak version of Hardy's uncertainty principle from
the asymptotic behavior of the largest eigenvalue of the sum of the
time and band limiting operators as the time limit and the band limit
approach infinity.  An asymptotic formula for this eigenvalue is
obtained from its well-known counterpart for the prolate integral
operator.

Our approach reveals a relationship between Hardy's uncertainty
principle and the theory of bandlimited functions.

\section{Mathematical preliminaries}

The \textit{Fourier transform} of a function $f \in \LtR$ is a bounded
operator on $\LtR$ defined as follows:
\begin{equation}
  \mathcal{F} f (\xi) = 
    \frac{1}{\sqrt{2 \pi}} \lim_{N \rightarrow \infty} \int_{-N}^N e^{-i x \xi} f(x) dx,
\end{equation}
where the limit is taken in $\LtR$.
We also use the notation $\hat{f}$ for $\mathcal{F} f$.

$\mathcal{F}$ is invertible on $\LtR$, and its bounded inverse, the 
\textit{inverse Fourier transform}, is defined as follows:

\begin{equation}
  \mathcal{F}^{-1} g(x) = 
    \frac{1}{\sqrt{2 \pi}} \lim_{N \rightarrow \infty} \int_{-N}^N e^{i \xi x} g(\xi) d\xi.
\end{equation}
Consequently,
\begin{equation}
	\mathcal{F}^{-1} = \mathcal{F}^\ast,
\end{equation}
i.e. the Fourier transform is a unitary operator on $\LtR$.

A bounded operator $P$ on a Hilbert space is called
\textit{idempotent}, if
\begin{equation}
  P^2 = P.
\end{equation}
A bounded operator $P$ on a Hilbert space is called an
\textit{orthogonal projection}, if it is idempotent and Hermitian,
i.e.
\begin{equation}
  P^2 = P \quad \text{and} \quad P^\ast = P.
\end{equation}

We denote the \textit{characteristic function} of a set $E \subset \RR$
by $\chi_E$
\begin{equation} \label{equ:defchi}
  \chi_E (x) = \left\{ 
               \begin{array}{ll}
               1, & \mbox{if $x \in E$},\\
               0, & \mbox{otherwise}.
               \end{array} \right.
\end{equation}
For a fixed set $E \subset \RR$, the mapping $f \mapsto \chi_E f$ is an 
orthogonal projection on $\LtR$. We denote this projection also by $\chi_E$.
In particular, we use the notation $\chitau$, when $E = (-\tau,\tau)$,
$\tau > 0$. 


For a fixed $\omega > 0$, we define the operator $S_\omega$ as follows
\begin{equation} \label{equ:defSomega}
	S_\omega := \mathcal{F} \chi_{(-\omega,\omega)} \mathcal{F}^\ast.
\end{equation}
The operator $S_\omega$ is also an orthogonal projection on $\LtR$.

The integral kernel of $S_\omega$ is well known, namely
\begin{equation}
  S_\omega f(x) = \int_\RR \frac{\sin \omega(x-y)}{\pi (x-y)} f(y) dy.
\end{equation}
This kernel is computed as follows:
\begin{equation} \label{equ:sinckernel}
  k(x,y) = \frac{1}{2 \pi} \int_{-\omega}^\omega e^{- i x t} e^{i t y} dt
         = \frac{1}{\pi} \frac{\sin \omega(x-y)}{x-y}.
\end{equation}
Due to \eqref{equ:defSomega}, $S_\omega$ is also an orthogonal
projection on $\LtR$.

If $P$ is an orthogonal projection, then
\begin{eqnarray}
  (I - P^\ast)(I - P)
  &=& (I - P)^2 \\
  &=& I - 2P + P^2 \\
  &=& I - P.
\end{eqnarray}
Thus for every vector $f$,
\begin{eqnarray}
  \Vert (I - P) f  \Vert^2
  &=& \langle (I - P^\ast)(I - P) f,f \rangle \\
  &=& \langle (I - P) f,f \rangle. \label{equ:norminnerprod}
\end{eqnarray}

For a bounded operator $T$, $\sigma(T)$ denotes the spectrum of $T$.
We need the following well-known lemma, see \cite[Prop. 6,
p. 16]{bonsall_duncan}.
\begin{lemma} \label{lem:spectrum}
  Let $A$ and $B$ be bounded operators on a Hilbert space. For $\lambda \neq 0$,
  $\lambda \in \sigma (AB)$ if and only if $\lambda \in \sigma (BA)$.
\end{lemma}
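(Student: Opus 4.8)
The plan is to prove both directions at once by exploiting the symmetry of the statement in $A$ and $B$. Since the roles of $A$ and $B$ are interchangeable, it suffices to establish the single implication: if $\lambda \neq 0$ lies in the resolvent set of $BA$, then $\lambda$ lies in the resolvent set of $AB$; swapping $A$ and $B$ then yields the converse. Recalling that $\lambda \notin \sigma(T)$ means precisely that $\lambda I - T$ admits a bounded two-sided inverse, I would restate the goal concretely as: assuming $\lambda I - BA$ is invertible, show that $\lambda I - AB$ is invertible.

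The heart of the argument is to exhibit an explicit inverse for $\lambda I - AB$ built from $R := (\lambda I - BA)^{-1}$. The candidate I would use is
\[
  \tfrac{1}{\lambda}\bigl( I + A R B \bigr),
\]
where the factor $1/\lambda$ is exactly the point at which the hypothesis $\lambda \neq 0$ enters. I would then verify directly that this operator is a two-sided inverse of $\lambda I - AB$ by multiplying it out on both sides and repeatedly using the defining identities $(\lambda I - BA) R = R (\lambda I - BA) = I$. Expanding $(\lambda I - AB)(I + A R B)$ and regrouping the middle terms as $A\bigl((\lambda I - BA) R\bigr) B = AB$ collapses the product to $\lambda I$; the reverse order is handled symmetrically via $R(\lambda I - BA) = I$.

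Guessing the inverse formula is the only non-routine step; once it is written down, the verification is a short algebraic computation, and I expect no genuine obstacle beyond bookkeeping. I would close by observing that, since the identity holds in either order of multiplication, $\tfrac{1}{\lambda}(I + A R B)$ is a genuine bounded two-sided inverse, so $\lambda \notin \sigma(AB)$. Interchanging $A$ and $B$ gives the reverse implication, and the two together prove the equivalence for every $\lambda \neq 0$. I would also remark that the hypothesis $\lambda \neq 0$ is essential, since the conclusion can fail at $\lambda = 0$, which is precisely why the statement is restricted to nonzero spectral values.
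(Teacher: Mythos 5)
Your proof is correct: the candidate $\tfrac{1}{\lambda}\bigl(I + A R B\bigr)$ with $R = (\lambda I - BA)^{-1}$ does satisfy $(\lambda I - AB)\cdot\tfrac{1}{\lambda}(I + ARB) = \tfrac{1}{\lambda}(I + ARB)\cdot(\lambda I - AB) = I$, as one checks by collapsing $A(\lambda R - BAR)B = A\bigl((\lambda I - BA)R\bigr)B = AB$ and its mirror image, and it is bounded, so $\lambda \notin \sigma(AB)$; the symmetry argument then finishes the equivalence. Note that the paper itself gives no proof at all here---it cites the result as well known from Bonsall--Duncan---so you have supplied what the paper delegates to a reference; your explicit-inverse computation is precisely the standard argument found there (usually stated with $\lambda$ normalized to $1$, or in the quasi-inverse language of Banach algebras, via the identity $(I - BA)^{-1} = I + B(I-AB)^{-1}A$). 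Your closing remark that $\lambda \neq 0$ is essential is also on point: for the unilateral shift $S$ one has $S^{\ast}S = I$ invertible while $SS^{\ast}$ is a proper projection, so $0 \in \sigma(SS^{\ast}) \setminus \sigma(S^{\ast}S)$.
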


The following lemma has a straightforward proof, which is omitted.
\begin{lemma} \label{lem:lambdaPinverse} If $P$ is an idempotent
  bounded operator on a Hilbert space and $\lambda \neq 0,1$, then
  \begin{equation}
    (\lambda I  - P)^{-1} = \frac{1}{\lambda} I + \frac{1}{\lambda (\lambda-1)} P.
  \end{equation}
\end{lemma}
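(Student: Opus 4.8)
The plan is to prove a purely algebraic identity about idempotent operators, so the approach should be direct computation rather than any spectral or functional-analytic machinery. The claim is that for an idempotent $P$ with $P^2 = P$ and a scalar $\lambda \neq 0, 1$, the operator $\lambda I - P$ is invertible with inverse $\frac{1}{\lambda} I + \frac{1}{\lambda(\lambda-1)} P$. Since the candidate inverse is explicitly given, I would simply verify that multiplying it by $\lambda I - P$ (on at least one side, and by symmetry on both) yields the identity operator.

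First I would set $Q := \frac{1}{\lambda} I + \frac{1}{\lambda(\lambda-1)} P$ and compute the product $(\lambda I - P) Q$ by expanding linearly and using the defining relation $P^2 = P$ to collapse the quadratic term. Carrying out the multiplication gives terms in $I$ and in $P$; the coefficient of $I$ should reduce to $1$, and the coefficient of $P$ should cancel to $0$ after substituting $P^2 = P$. Concretely, one finds
\begin{equation}
  (\lambda I - P) Q = I + \left( \frac{1}{\lambda - 1} - \frac{1}{\lambda} - \frac{1}{\lambda(\lambda-1)} \right) P,
\end{equation}
and the bracketed scalar vanishes identically, leaving $I$. Because $\lambda I - P$ and $Q$ are both polynomials in the single operator $P$, they commute, so the same computation shows $Q(\lambda I - P) = I$ as well, establishing that $Q$ is a genuine two-sided inverse.

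There is essentially no obstacle here: the only points requiring care are that the hypothesis $\lambda \neq 0, 1$ is exactly what keeps the denominators $\lambda$ and $\lambda - 1$ nonzero so that $Q$ is well-defined, and that boundedness of $Q$ is immediate since it is a finite linear combination of the bounded operators $I$ and $P$. The entire argument is the single scalar identity $\frac{1}{\lambda-1} - \frac{1}{\lambda} - \frac{1}{\lambda(\lambda-1)} = 0$, which is why the authors describe the proof as straightforward and omit it; I would include only the one-line verification and the remark about why the scalar coefficients are defined.
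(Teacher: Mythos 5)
Your proof is correct: the expansion of $(\lambda I - P)Q$ using $P^2 = P$ yields the scalar identity $\frac{1}{\lambda-1} - \frac{1}{\lambda} - \frac{1}{\lambda(\lambda-1)} = 0$, and the commutation remark legitimately upgrades this to a two-sided inverse. The paper omits the proof precisely because this direct verification is the intended ``straightforward'' argument, so your approach coincides with it.
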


\section{Spectrum of $\chitau$ + $S_\omega$}

In this section, we describe the spectrum of the operator $\chitau + S_\omega$
on $\LtR$, where $\tau,\omega > 0$.
 
For a fixed $c > 0$, the integral operator on $\Ltsp(-1,1)$ with the kernel 
\begin{equation} 
  \frac{\sin c(x-y)}{\pi (x-y)} 
\end{equation}
has eigenvalues $\lambda_0 > \lambda_1 > \dots > 0$ \cite{slepian_conc, slepian_sonnenblick}. 
The eigenfunctions are the PSWFs, and the eigenvalues obey certain asymptotic formulas. 
We are only interested in the largest eigenvalue $\lambda_0$ which has the following 
asymptotics \cite{slepian_sonnenblick}
\begin{equation} \label{equ:lambda0}
  \lambda_0 = 
  1 - 4 \sqrt{\pi} \sqrt{c} \,e^{-2 c} \left( 1 + \mathcal{O} \left(\frac{1}{c} \right) \right),
\end{equation}
when $c \rightarrow \infty$.

We show that the eigenvalues of $T = \chitau + S_\omega$, acting on $\LtR$, can be 
expressed in terms of those of the operator with kernel
\eqref{equ:sinckernel}, acting not on $\LtR$, but rather on $\Ltsp(-\tau,\tau)$.

To indicate the dependence on $c$, we write $\lambda_n(c)$.

\begin{theorem} \label{lem:eigenvalue}
  Let $\tau,\omega > 0$ and let $T = \chitau + S_\omega$. 
  If $\lambda \in \sigma(T)$ and $\lambda \neq 0,1$, then
  \begin{equation}
    (\lambda - 1)^2 = \lambda_n (\omega \tau)
  \end{equation}
  for some $n$. 
\end{theorem}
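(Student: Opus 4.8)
The plan is to characterise the invertibility of $T-\lambda I$ for $\lambda\neq 0,1$ by a resolvent factorisation that reduces everything to the operator $\chitau S_\omega \chitau$ restricted to the range of $\chitau$, which is a rescaled prolate integral operator. Write $P=\chitau$ and $Q=S_\omega$; both are orthogonal projections, hence idempotent, so the two preparatory lemmas apply.

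First I would invoke Lemma~\ref{lem:lambdaPinverse}: since $Q$ is idempotent and $\lambda\neq 0,1$, the operator $\lambda I-Q$ is invertible with
\[
  (\lambda I-Q)^{-1}=\tfrac{1}{\lambda}I+\tfrac{1}{\lambda(\lambda-1)}Q .
\]
Writing $T-\lambda I=P-(\lambda I-Q)=-(\lambda I-Q)\bigl[I-(\lambda I-Q)^{-1}P\bigr]$ shows that $T-\lambda I$ is invertible if and only if $I-(\lambda I-Q)^{-1}P$ is, i.e. if and only if $1\notin\sigma\bigl((\lambda I-Q)^{-1}P\bigr)$. Because $P=P^{2}$, I can factor $(\lambda I-Q)^{-1}P=\bigl[(\lambda I-Q)^{-1}P\bigr]P$ and apply Lemma~\ref{lem:spectrum} with the nonzero value $1$, replacing this operator by $P(\lambda I-Q)^{-1}P$ without changing whether $1$ lies in the spectrum.

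The next step is an explicit computation: using the resolvent formula,
\[
  P(\lambda I-Q)^{-1}P=\tfrac{1}{\lambda}P+\tfrac{1}{\lambda(\lambda-1)}\,PQP .
\]
This operator is block diagonal for the splitting $\LtR=\operatorname{ran}P\oplus\ker P$, vanishing on $\ker P$, so its nonzero spectrum equals that of its restriction to $\operatorname{ran}P=\Ltsp(-\tau,\tau)$, where $P$ acts as the identity and $PQP=\chitau S_\omega\chitau$ acts as the integral operator with kernel $\sin\omega(x-y)/\pi(x-y)$ on $(-\tau,\tau)$. A dilation $x\mapsto\tau x$ identifies this restriction, call it $\Lambda$, with the prolate operator on $\Ltsp(-1,1)$ of parameter $c=\omega\tau$, so $\sigma(\Lambda)\setminus\{0\}=\{\lambda_n(\omega\tau)\}$. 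Applying the spectral mapping theorem to the affine function $t\mapsto \tfrac{1}{\lambda}+\tfrac{1}{\lambda(\lambda-1)}t$, the condition $1\in\sigma\bigl(P(\lambda I-Q)^{-1}P\bigr)$ is equivalent to $(\lambda-1)^2\in\sigma(\Lambda)$; and since $\lambda\neq 1$ forces $(\lambda-1)^2\neq 0$, compactness of $\Lambda$ upgrades this to a genuine eigenvalue, $(\lambda-1)^2=\lambda_n(\omega\tau)$.

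I expect the main obstacle to be bookkeeping at the level of the full spectrum rather than the point spectrum: each implication must be an honest invertibility statement, not merely a statement about eigenvectors. In particular one has to justify the block-diagonal reduction that discards the harmless eigenvalue $0$, and confirm that the dilation is a unitary equivalence so that the nonzero spectral values of $\Lambda$ really are the $\lambda_n(\omega\tau)$. The hypotheses $\lambda\neq 0,1$ are exactly what make Lemma~\ref{lem:lambdaPinverse} applicable and what keep $(\lambda-1)^2$ away from $0$, thereby aligning the reduction with the compactness of the prolate operator.
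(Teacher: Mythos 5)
Your proof is correct. Its first half coincides, modulo notation and a harmless rescaling, with the paper's: the factorization $T-\lambda I=-(\lambda I-S_\omega)\bigl[I-(\lambda I-S_\omega)^{-1}\chitau\bigr]$ via Lemma~\ref{lem:lambdaPinverse}, followed by Lemma~\ref{lem:spectrum} to symmetrize, is exactly how the paper passes from $\lambda\in\sigma(T)$ to $\lambda\in\sigma\bigl(\chione+\tfrac{1}{\lambda-1}\chione S_\omega\chione\bigr)$; indeed the paper's operator equals $\lambda\,P(\lambda I-Q)^{-1}P$ in your notation (the paper simply sets $\tau=1$ at the outset instead of dilating at the end). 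The genuine difference is the endgame. The paper inverts $\lambda I-\chione$ once more, notes that $(\lambda I-\chione)^{-1}\chione S_\omega\chione$ is compact, and uses the Fredholm alternative to produce an eigenfunction $f\in\LtR$ of the unsymmetrized operator; multiplying its eigenvalue equation by $\chione$ and checking $\chione f\neq 0$, it exhibits $\chione f$ as a prolate eigenfunction with eigenvalue $(\lambda-1)^2$. You instead stay entirely at the level of spectra: block-diagonality of $P(\lambda I-Q)^{-1}P$ with respect to $\operatorname{ran}P\oplus\ker P$, the affine spectral mapping theorem, and compactness of the prolate operator itself to convert the nonzero spectral value $(\lambda-1)^2$ into an eigenvalue. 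Both endgames are sound and both rely on compactness; yours avoids constructing an eigenfunction and the attendant nonvanishing check, which is slightly cleaner for the stated one-way implication, whereas the paper's explicit eigenfunction bookkeeping pays off in its subsequent Remark, where analogous manipulations run in reverse show that every number $1\pm\sqrt{\lambda_n(\omega)}$ actually is an eigenvalue of $T$.
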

\begin{proof}
  In this proof we can assume that $\tau = 1$. The general case follows by a linear
  change of variables.
   
  It follows from the assumptions that $\lambda I - \chione - S_\omega$ is singular, 
  and so is the operator
  \begin{eqnarray}
    (\lambda I - S_\omega)^{-1} (\lambda I - \chione - S_\omega) 
    &=& I - (\lambda I - S_\omega)^{-1} \chione \\
    &=& I - \left( \frac{1}{\lambda} I + \frac{1}{\lambda (1-\lambda)} S_\omega \right) \chione,
  \end{eqnarray}
  where we used Lemma \ref{lem:lambdaPinverse} for $(\lambda I - S_\omega)^{-1}$.
  Thus
  \begin{eqnarray}
    \lambda 
    &\in& \sigma \left( \chione + \frac{1}{\lambda - 1} S_\omega \chione \right) \\
    &=& \sigma \left( \chione + \frac{1}{\lambda - 1} S_\omega \chione^2 \right).    
  \end{eqnarray}
  We use Lemma \ref{lem:spectrum} for the operators $\chione$ and $I + \frac{1}{\lambda - 1} S_\omega \chione$
  and the assumption that $\lambda \neq 0$, to conclude that 
  \begin{equation}
    \lambda \in \sigma \left( \chione + \frac{1}{\lambda - 1} \chione S_\omega \chione \right).
  \end{equation}    
  It follows that the operator
  \begin{equation}
    \lambda I - \chione - \frac{1}{\lambda - 1} \chione S_\omega \chione
  \end{equation}
  is singular, and so is
  \begin{equation}
    I - (\lambda I - \chione)^{-1} \frac{1}{\lambda - 1} \chione S_\omega \chione.
  \end{equation}
  The operator $\chione S_\omega \chione$ is Hilbert-Schmidt, and therefore compact,
  and so is $(\lambda I - \chione)^{-1} \chione S_\omega \chione$.
  
  From the spectral theory of compact operators, it follows that $1$ is an eigenvalue
  of 
  \begin{equation}
    (\lambda I - \chione)^{-1} \frac{1}{\lambda - 1} \chione S_\omega \chione,
  \end{equation}
  i.e. there is an $f \in \LtR$, $f \neq 0$ such that
  \begin{equation}
    f - (\lambda I - \chione)^{-1} \frac{1}{\lambda - 1} \chione S_\omega \chione f = 0
  \end{equation}
  or 
  \begin{equation} \label{equ:lambdaf}
    \lambda f - \chione f - \frac{1}{\lambda - 1} \chione S_\omega \chione f = 0.
  \end{equation}
  Using \eqref{equ:lambdaf}, we note that $\chione f \neq 0$ in $\LtR$, 
  because $\lambda f \neq 0$ in $\LtR$.
  We now multiply \eqref{equ:lambdaf} by $\chione$ on the left to get
  \begin{equation}
    \lambda \chione f - \chione f - \frac{1}{\lambda - 1} \chione S_\omega \chione f = 0
  \end{equation} 
  and, consequently
  \begin{equation}
    (\lambda - 1)^2 \chione f - \chione S_\omega \chione f = 0.
  \end{equation} 
  Thus $\chione f$ is an eigenfunction of the operator with kernel 
  $\frac{\sin \omega (x-y)}{\pi (x-y)}$ on $\Ltsp (-1,1)$ with 
  eigenvalue $(\lambda - 1)^2$.

  Thus
  \begin{equation}
    (\lambda - 1)^2 = \lambda_n (\omega)
  \end{equation}
  for some $n$.
\end{proof}

\begin{remark}
  Only Theorem \ref{lem:eigenvalue} is used in the proof of Theorem \ref{thm:hardyweak}.
  However, we devote the rest of this section to a complete description of the spectrum
  of the operator $T = \chitau + S_\omega$.
  
  In this proof we can again assume that $\tau = 1$. The general case follows by a linear
  change of variables.
  
  We can show that if $(\lambda - 1)^2 = \lambda_n (\omega)$, then there exists an 
  eigenfunction for $T$ with eigenvalue $\lambda$.
    
  It follows from Slepian's theory, that $0 < \lambda_n (\omega) < 1$. We write
  \begin{equation} \label{equ:fpsi}
    f = \psi_n + (\lambda - 1) \widetilde{\psi}_n,
  \end{equation}
  where $\psi_n$ is the $n$th PSWF and $\widetilde{\psi}_n$
  is the extension of $\psi_n$ to $\RR$, i.e.
  \begin{equation}
    S_\omega \psi_n = \lambda_n (\omega) \widetilde{\psi}_n
  \end{equation}
  and
  \begin{equation}
    \chione \widetilde{\psi}_n = \psi_n.
  \end{equation}
  We multiply \eqref{equ:fpsi} by $\chione$ to get
  \begin{equation}
    \chione f 
    = \psi_n + (\lambda - 1) \psi_n
    = \lambda \psi_n.
  \end{equation}
  Moreover,
  \begin{eqnarray}
    (\lambda I - S_\omega)^{-1} \chione f
    &=& \left( \frac{1}{\lambda} I + \frac{1}{\lambda (\lambda - 1)} S_\omega \right) \lambda \psi_n \\
    &=& \psi_n + \frac{1}{\lambda - 1} \lambda_n \widetilde{\psi}_n \\
    &=& \psi_n + (\lambda - 1) \widetilde{\psi}_n \\
    &=& f,
  \end{eqnarray}
  where we used the assumption that $\lambda_n = (\lambda - 1)^2$.
  
  Finally,
  \begin{equation}
    \chione f = (\lambda I - S_\omega) f
  \end{equation}
  or
  \begin{equation}
    \chione f + S_\omega f = \lambda f.
  \end{equation}
  Thus all numbers of the form $1 \pm \sqrt{\lambda_n (\omega)}, 
  n = 0,1,\dots$ are eigenvalues of $T$.

  The point $\lambda = 1$ is also in the spectrum $\sigma(T)$ as an accumulation
  point of the eigenvalues.

  It remains to consider $\lambda = 0$. To observe that $T$ is singular, we consider
  the sequence of functions
  \begin{equation}
    f_n (x) = e^{i n x} e^{- (x - n)^2}
  \end{equation}
  where $x \in \RR$. It is clear that
  \begin{equation}
   \Vert f_n \Vert = \Vert f_0 \Vert > 0,
  \end{equation}
  for $n = 0,1,\dots$, but
  \begin{equation}
    |T f_n| \rightarrow 0
  \end{equation}
  in $\LtR$. 

  Thus we have shown that $\sigma(T)$ consists of the eigenvalues of $T$ of the
  form $1 \pm \sqrt{\lambda_n (\omega)}, n = 0,1,\dots$, and two additional points
  $\lambda = 0$ and $\lambda = 1$.

  The spectrum of the sum of two orthogonal projections was described
  in \cite{bjorstad} in a somewhat different setting.

\end{remark}

\section{Proof of the Theorem \ref{thm:hardyweak}}
In this section, we present the proof of Theorem \ref{thm:hardyweak}.

\begin{proof}[Proof of Theorem \ref{thm:hardyweak}]

In this proof, we assume that
\begin{equation} \label{equ:abtwo}
  a = b = 2.
\end{equation}
The general case follows by a linear change of variables.

For a fixed $\tau > 0$, we consider the restriction $\chitau f$ of $f$ to the interval
$(-\tau,\tau)$. The decay of $f$ at infinity in \eqref{equ:boundtimedom} gives an
estimate on $f - \chitau f$ in the $\Ltsp$-norm. Specifically,
\begin{eqnarray}
  \Vert f - \chitau f \Vert^2
  &=& \int_{|x|> \tau} |f(x)|^2 dx \\
  &\leq& M^2 \int_{|x|>\tau} (e^{-a x^2/2})^2 dx \\
  &=& 2 M^2 \int_\tau^\infty e^{-a x^2} dx \\
  &\leq& 2 M^2 \int_\tau^\infty \frac{x}{\tau} e^{-a x^2} dx \\
  &=& \frac{M^2}{a \tau} e^{- a\tau^2}. \label{equ:normestchi}
\end{eqnarray}
Similarly, \eqref{equ:boundfrequdom} implies that for a fixed $\omega > 0$,
\begin{equation} \label{equ:normestS}
	\Vert f - S_\omega f \Vert^2 
	\leq \frac{M^2}{b \omega} e^{- b\omega^2}.
\end{equation}

Setting $\tau = \omega$, using \eqref{equ:norminnerprod} and combining \eqref{equ:abtwo},
\eqref{equ:normestchi} and \eqref{equ:normestS} gives
\begin{eqnarray}
   \lefteqn{\Vert f - \chitau f \Vert^2 + \Vert f - S_\omega f \Vert^2 =} \\
   &=&  \langle (I - \chitau) f,f \rangle + \langle (I - S_\omega) f,f \rangle \\
   &=& \langle (2I - \chitau - S_\omega) f,f \rangle \\
   &\leq& \frac{M^2}{\omega} e^{- 2\omega^2}. \label{equ:upperbound}
\end{eqnarray}

The operator $T' = 2I - \chitau - S_\omega$ is Hermitian. According to
Theorem \ref{lem:eigenvalue}, its smallest eigenvalue $\lambda_{min}$ 
satisfies
\begin{equation}
  \lambda_{min}
  \geq 2 - (1 + \sqrt{\lambda_0}) = 1 - \sqrt{\lambda_0}.
\end{equation}
Consequently,
\begin{eqnarray}
  (1 - \sqrt{\lambda_0}) \Vert f \Vert^2 
  &\leq& \lambda_{min} \Vert f \Vert^2 \\
  &\leq& \langle (2I - \chitau - S_\omega) f,f \rangle \\
	&\leq& \frac{M^2}{\omega} e^{- 2\omega^2}. \label{equ:bounds}
\end{eqnarray}
The eigenvalue $\lambda_0$ satisfies \eqref{equ:lambda0}. Thus since 
$c = \omega \tau = \omega^2$, we get
\begin{equation} \label{equ:lambda0omega}
\lambda_0 = 1 - 4 \sqrt{\pi} \,\omega \,e^{-2 \omega^2} \left( 1 + \mathcal{O}
\left(\frac{1}{\omega^2} \right) \right).
\end{equation}
We recall the elementary formula
\begin{equation} \label{equ:sqrtoneminusx}
  \sqrt{1-x} = 1 - x/2 + \mathcal{O}(x^2)
\end{equation}
as $x \rightarrow 0$.
Substituting \eqref{equ:lambda0omega} and \eqref{equ:sqrtoneminusx} into \eqref{equ:bounds} 
we obtain
\begin{equation}
  2 \sqrt{\pi} \,\omega \,e^{-2 \omega^2}  \Vert f \Vert^2 \left( 1 + \mathcal{O} \left(\frac{1}{\omega^2} \right) \right) 
  \leq \frac{M^2}{\omega} e^{- 2\omega^2}.
\end{equation}
Letting $\omega \rightarrow \infty$, we deduce that $\Vert f \Vert = 0$.
\end{proof}

\subsection{Alternative proof}
A reviewer of this paper has remarked that an alternative proof is
possible based on the following result proved in
\cite[p. 68]{landau_pollak}.
\begin{theorem}
 If $\Vert f \Vert = 1$,
%
\begin{equation}
\alpha = \left(\int_{-\frac{T}{2}}^\frac{T}{2} |f(t)|^2 dt \right)^{\frac12},
\end{equation}
\begin{equation}
\beta = \left(\int_{-\Omega}^\Omega |\hat{f}(\xi)|^2 d\xi \right)^{\frac12},
\end{equation}
then
\begin{equation} \label{equ:landau}
   \arccos \alpha + \arccos \beta \geq \arccos
   \sqrt{\lambda_0\left({\textstyle \frac12} \Omega T\right)}.
\end{equation}
\end{theorem}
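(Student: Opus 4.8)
The plan is to reformulate \eqref{equ:landau} as a triangle inequality for the angles that the unit vector $f$ makes with the subspaces of time-limited and band-limited functions. Set $P=\chi_{(-T/2,T/2)}$ and $Q=S_\Omega$, both orthogonal projections on $\LtR$. Since $P=P^\ast=P^2$ and $\|f\|=1$, we have $\langle f,Pf\rangle=\|Pf\|^2$, so $\alpha=\|Pf\|$; and because $\mathcal F$ is unitary and the interval $(-\Omega,\Omega)$ is symmetric, $\beta=\|\chi_{(-\Omega,\Omega)}\hat f\|=\|Qf\|$. Thus $\alpha$ and $\beta$ are the lengths of the orthogonal projections of $f$ onto $\mathcal T=\operatorname{ran}P$ and $\mathcal B=\operatorname{ran}Q$ respectively.

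First I would identify the right-hand side with an operator norm. The operator $PQP=\chi_{(-T/2,T/2)}S_\Omega\chi_{(-T/2,T/2)}$ is, after the linear change of variables that normalizes the time interval to $(-1,1)$, exactly the prolate integral operator of Theorem \ref{lem:eigenvalue}, acting on $\Ltsp(-1,1)$ with kernel $\frac{\sin c(x-y)}{\pi(x-y)}$ and parameter $c=\omega\tau=\tfrac12\Omega T$. Being positive and self-adjoint, its norm equals its largest eigenvalue $\lambda_0(\tfrac12\Omega T)$. By Lemma \ref{lem:spectrum} the operators $PQP$ and $QPQ=(PQ)(PQ)^\ast$ have the same nonzero spectrum, so $\|PQ\|^2=\|PQP\|=\lambda_0(\tfrac12\Omega T)$. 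Hence for any unit vectors $u\in\mathcal T$, $v\in\mathcal B$ we get $|\langle u,v\rangle|=|\langle u,PQv\rangle|\le\|PQ\|=\sqrt{\lambda_0(\tfrac12\Omega T)}$.

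Next I would pick the unit vectors $u=Pf/\|Pf\|$ and $v=Qf/\|Qf\|$ (the degenerate cases $\alpha=0$ or $\beta=0$ being immediate, since then the left-hand side of \eqref{equ:landau} is at least $\pi/2$). A short computation gives $\langle u,f\rangle=\|Pf\|=\alpha$ and $\langle v,f\rangle=\|Qf\|=\beta$, both real and nonnegative, so the components of $u$ and $v$ along $f$ are $\alpha f$ and $\beta f$. Writing $u=\alpha f+\sqrt{1-\alpha^2}\,u'$ and $v=\beta f+\sqrt{1-\beta^2}\,v'$ with $u',v'\perp f$ unit vectors, I obtain $\langle u,v\rangle=\alpha\beta+\sqrt{1-\alpha^2}\sqrt{1-\beta^2}\,\langle u',v'\rangle$, whence by the reverse triangle inequality and $|\langle u',v'\rangle|\le1$,
\begin{equation}
  |\langle u,v\rangle|\ge\alpha\beta-\sqrt{1-\alpha^2}\sqrt{1-\beta^2}=\cos\!\left(\arccos\alpha+\arccos\beta\right).
\end{equation}
Combining this with the bound $|\langle u,v\rangle|\le\sqrt{\lambda_0(\tfrac12\Omega T)}$ from the previous step yields $\cos(\arccos\alpha+\arccos\beta)\le\sqrt{\lambda_0(\tfrac12\Omega T)}$, and applying $\arccos$ (which is decreasing on $[0,\pi]$, where both arguments lie) gives precisely \eqref{equ:landau}.

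The one genuinely substantial point is the operator-norm identity $\|PQ\|^2=\lambda_0(\tfrac12\Omega T)$: everything geometric is elementary once this is in place. This identity is essentially Slepian's concentration theorem, but here it is available directly from Theorem \ref{lem:eigenvalue} together with the fact that $\chione S_\omega\chione$ is a positive compact operator whose largest eigenvalue is $\lambda_0(\omega)$. The remaining care is bookkeeping: tracking the absolute values in the complex inner product (handled by the reverse triangle inequality above, so that passing to $|\langle u',v'\rangle|\le1$ loses nothing) and verifying that the sum $\arccos\alpha+\arccos\beta$ stays in the range where $\cos$ is monotone, which is automatic since each term lies in $[0,\pi/2]$.
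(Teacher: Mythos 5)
The paper contains no proof of this statement at all: it is quoted verbatim from \cite[p.~68]{landau_pollak} as the external input to the reviewer's ``alternative proof,'' so your proposal is not paralleling an argument in the paper but supplying one that the paper deliberately leaves to the literature. Your proof is correct, and it is in substance the classical Landau--Pollak geometric argument: with $P=\chi_{(-T/2,T/2)}$ and $Q=S_\Omega$, the quantities $\arccos\alpha$ and $\arccos\beta$ are the angles between $f$ and the ranges of $P$ and $Q$, the two subspaces cannot be closer than $\arccos\Vert PQ\Vert$, and the stated inequality is the triangle inequality for angles, which you implement correctly via the decomposition $u=\alpha f+\sqrt{1-\alpha^2}\,u'$, $v=\beta f+\sqrt{1-\beta^2}\,v'$ and the cosine addition formula (the reverse triangle inequality handles the complex inner product, and the sum of two angles in $[0,\pi/2]$ stays in the range where $\arccos(\cos\theta)=\theta$). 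The one substantive identity, $\Vert PQ\Vert^2=\lambda_0\left(\tfrac12\Omega T\right)$, you correctly reduce by rescaling to Slepian's description of the spectrum of the kernel $\sin c(x-y)/\pi(x-y)$ on $\Ltsp(-1,1)$ with $c=\Omega\cdot\tfrac{T}{2}$, which is exactly the input the paper already uses in Section 3; your argument thus makes the alternative-proof subsection self-contained using only machinery the paper has on hand.

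Three small blemishes, none fatal. First, you have the two products switched: $(PQ)(PQ)^\ast=PQP$ and $(PQ)^\ast(PQ)=QPQ$, not the other way around; in fact Lemma \ref{lem:spectrum} is not needed here, since the C*-identity $\Vert TT^\ast\Vert=\Vert T\Vert^2$ applied to $T=PQ$ gives $\Vert PQ\Vert^2=\Vert PQP\Vert$ in one step. Second, the prolate integral operator is not ``the operator of Theorem \ref{lem:eigenvalue}'' --- that theorem concerns $\chitau+S_\omega$; the operator you mean is $\chione S_\omega\chione$, introduced at the start of Section 3, whose largest eigenvalue is $\lambda_0$ by Slepian's ordering $\lambda_0>\lambda_1>\dots>0$. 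Third, besides $\alpha=0$ or $\beta=0$ you should also dispose of the degenerate case $\alpha=1$ (or $\beta=1$), where $u'$ cannot be obtained by normalizing $u-\alpha f$; there one takes any unit $u'\perp f$, or argues directly that $\beta=|\langle f,v\rangle|\le\Vert PQ\Vert$, so nothing is lost.
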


We present an outline of an alternative proof. Let us assume that
$\Vert f \Vert = 1$. It follows from \eqref{equ:abtwo} and
\eqref{equ:normestchi} that
\begin{equation} \label{equ:upperboundchif0}
	\Vert \chitau f \Vert^2 = \Vert f \Vert^2 - \Vert f - \chitau
        f \Vert^2 \geq 1 - \frac{M^2}{2 \tau} \, e^{- 2\tau^2}.
\end{equation}
Consequently,  for all sufficiently large $\tau$'s,
\begin{equation}
  \Vert \chitau f \Vert \geq \left(1 - \frac{M^2}{2 \tau} \, e^{-2\tau^2} \right)^{\frac12}
  \geq 1 - \frac{M^2}{2 \tau} \, e^{- 2\tau^2},
\end{equation}
and 
\begin{equation}  \label{equ:upperboundchif}
  \arccos\Vert \chitau f \Vert \leq 
  \arccos \left(1-\frac{M^2}{2\tau} \, e^{- 2\tau^2}\right).
\end{equation}
We recall that as $x \rightarrow 0^+$,
\begin{equation} \label{equ:arccos}
	\arccos(1-x) = \sqrt{2x} \left( 1 + \mathcal{O}(x) \right).  
\end{equation}
Combining \eqref{equ:upperboundchif} and \eqref{equ:arccos}, we
conclude that for every sufficiently large $\tau$
\begin{equation} \label{equ:upperboundstau}
\arccos\Vert \chitau f \Vert \leq \frac{2M}{\sqrt\tau} \, e^{- \tau^2}.
\end{equation}
Similarly,  for every sufficiently large $\omega$
\begin{equation}\label{equ:upperboundsomega}
\arccos\Vert S_\omega f \Vert \leq \frac{2M}{\sqrt\omega} \, e^{-\omega^2}.
\end{equation}
Combining \eqref{equ:upperboundstau} and \eqref{equ:upperboundsomega},
and setting $\tau = \omega$, we obtain
\begin{equation}
	\arccos\Vert \chi_{(-\omega,\omega)} f \Vert + \arccos\Vert S_\omega f \Vert 
	\leq \frac{4M}{\sqrt\omega} \, e^{-\omega^2}.
\end{equation}
Setting $\Omega = \omega$ and $T = 2\omega$ in \eqref{equ:landau}, we
obtain
\begin{equation} 
	\arccos \sqrt{\lambda_0(\omega^2)} \leq
	\arccos\Vert \chi_{(-\omega,\omega)} f \Vert + \arccos\Vert S_\omega f \Vert.
\end{equation}
Consequently,
\begin{equation} \label{equ:final1}
	\arccos \sqrt{\lambda_0(\omega^2)} \leq
        \frac{4M}{\sqrt\omega} \, e^{-\omega^2}.
\end{equation}
Substituting \eqref{equ:lambda0omega} into \eqref{equ:sqrtoneminusx},
we obtain
\begin{equation} \label{equ:sqrtlambda}
\sqrt{\lambda_0(\omega^2)} = 1 - 2\sqrt{\pi} \,\omega \,e^{-2
\omega^2}  \left(1 +  \mathcal{O}\left(\frac{1}{\omega^2} \right) \right) .
\end{equation}
Substituting \eqref{equ:sqrtlambda} into \eqref{equ:arccos}, we obtain
\begin{equation} \label{equ:acoslambda}
\arccos \sqrt{\lambda_0(\omega^2)} = 
2\sqrt[4]{\pi} \,\sqrt\omega \,e^{-\omega^2}  
\left(1 +  \mathcal{O}\left(\frac{1}{\omega^2} \right) \right) .
\end{equation}
Combining \eqref{equ:final1} and \eqref{equ:acoslambda}, we arrive at
a contradiction
\begin{equation}
2\sqrt[4]{\pi} \,\sqrt\omega \,e^{-\omega^2}  
\left(1 +  \mathcal{O}\left(\frac{1}{\omega^2} \right) \right) 
\leq \frac{4M}{\sqrt\omega} \, e^{-\omega^2}.
\end{equation}

Our proof of Theorem~\ref{thm:hardyweak} uses some techniques similar
to those in \cite{landau_pollak}, e.g. a linear combination of the
time and the frequency limiting operators is already considered in
\cite[equation (6)]{landau_pollak}.

\subsection*{Acknowledgements}

The authors thank the reviewers for their helpful comments and
suggestions, which have greatly improved this paper. The authors are
supported by the FWF grants S10602-N13 and P23902-N13.

\bibliography{hardyup}
\bibliographystyle{abbrv}

\end{document}